\documentclass[ pdflatex]{sn-jnl}

\usepackage{amsmath,amssymb,amsthm}

\newtheorem{theorem}{Theorem}
\newtheorem{lemma}{Lemma}

\begin{document}

\title{On Carmichael Numbers Constructed from Consecutive Jacobsthal Numbers}

%%=============================================================%%
%% Prefix	-> \pfx{Dr}
%% GivenName	-> \fnm{Joergen W.}
%% Particle	-> \spfx{van der} -> surname prefix
%% FamilyName	-> \sur{Ploeg}
%% Suffix	-> \sfx{IV}
%% NatureName	-> \tanm{Poet Laureate} -> Title after name
%% Degrees	-> \dgr{MSc, PhD}
%% \author*[1,2]{\pfx{Dr} \fnm{Joergen W.} \spfx{van der} \sur{Ploeg} \sfx{IV} \tanm{Poet Laureate} 
%%                 \dgr{MSc, PhD}}\email{iauthor@gmail.com}
%%=============================================================%%

\author{\fnm{Hayat} \sur{Bensella}}\email{hbensella@usthb.dz / bensella.hayet@gmail.com}

\affil{\orgdiv{Faculty of Mathematics, LATN Laboratory}, \orgname{USTHB}, \orgaddress{\street{BP 32, El Alia}, \city{Bab Azzouar}, \postcode{16111}, \state{Algiers}, \country{Algeria}}}

\abstract{
We prove that no Carmichael number has the form
\[
N=(2^mJ_n+1)(2^mJ_{n+1}+1)(2^mJ_{n+2}+1),
\]
where
\[
J_n=\frac{2^n-(-1)^n}{3}
\]
is the Jacobsthal sequence, \(m,n\ge1\), and all three factors are prime.
Assuming that such an \(N\) is Carmichael, Korselt's criterion yields
divisibility conditions relating consecutive Jacobsthal numbers to powers
of two. Combining these conditions with the Binet formula and a finite
case analysis for \(n\le4\), we obtain a contradiction. Consequently,
this family contains no Carmichael numbers.
}

\keywords{Carmichael numbers, Korselt's criterion, Jacobsthal sequence,
prime numbers, divisibility properties}

\maketitle

\section{Introduction}

Carmichael numbers are composite integers $N$ satisfying
\[
a^{N-1}\equiv 1\pmod N
\]
for every integer $a$ such that $\gcd(a,N)=1$.
By Korselt's criterion, a composite integer $N$ is a
Carmichael number if and only if $N$ is squarefree and
\[
p-1\mid N-1
\]
for every prime divisor $p$ of $N$.\\
As a particular case of the Lucas sequences of the first kind,
the Jacobsthal sequence $\{J_m\}_{m\geq 0}$ is defined by
$J_0=0$, $J_1=1$, and the recurrence relation
\[
J_m=J_{m-1}+2J_{m-2}, \qquad m\geq 2.
\]

Its Binet formula is given by
\begin{equation*}
J_m=\frac{2^m-(-1)^m}{3},
\label{eq:jacobsthal-binet}
\end{equation*}
where $2$ and $-1$ are the roots of the characteristic
equation
\[
P(x)=x^2-x-2.
\]

For further information on Jacobsthal numbers and their
properties, see \cite{Horadam2}.\\

The study of Carmichael numbers has a long history.
Alford, Granville, and Pomerance proved that there are
infinitely many Carmichael numbers \cite{Alford1994}.
Subsequent research has investigated their distribution
and their occurrence in special arithmetic forms.
Banks, Finch, Luca, Pomerance, and St\u{a}nic\u{a}
studied Sierpi\'nski and Carmichael numbers
\cite{Banks2015}, while Cilleruelo, Luca, and Pizarro
examined Carmichael numbers in sequences of the form
$(2^n k+1)_{n\geq 1}$ \cite{Cilleruelo2016}.
Other contributions have addressed the density of
related arithmetic sets \cite{Erdos1979}, the
distribution of primes in short intervals
\cite{Granville2015}, and the impossibility of certain
types of Carmichael numbers \cite{Wright2012}.
More recently, Alahmadi and Luca proved that no
Carmichael number can be of the form $2^n p+1$,
where $p$ is prime \cite{AlahmadiLuca2022}.

Motivated by these investigations, we consider integers
constructed from three consecutive terms of the
Jacobsthal sequence.

We study integers of the form
\[
N=(2^mJ_n+1)(2^mJ_{n+1}+1)(2^mJ_{n+2}+1),
\]
where $m,n\geq 1$.

Our main result establishes that, whenever all three
factors are prime, their product cannot be a
Carmichael number.

\medskip

\begin{theorem}\label{thm1}

Let
\[
N=(2^mJ_n+1)(2^mJ_{n+1}+1)(2^mJ_{n+2}+1),
\]
where
\[
J_k=\frac{2^k-(-1)^k}{3},
\qquad m,n\geq 1.
\]
If all three factors are prime, then $N$ is not a
Carmichael number.
    
\end{theorem}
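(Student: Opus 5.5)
Write $a=2^mJ_n$, $b=2^mJ_{n+1}$, $c=2^mJ_{n+2}$, so that $N-1=a+b+c+ab+bc+ca+abc$. Assuming $N$ is Carmichael, Korselt's criterion gives $a\mid N-1$, $b\mid N-1$ and $c\mid N-1$. Reducing each condition modulo the relevant factor and cancelling the common power $2^m$ yields
\[
J_n \mid J_{n+1}+J_{n+2}+2^mJ_{n+1}J_{n+2},\qquad
J_{n+1} \mid J_n+J_{n+2}+2^mJ_nJ_{n+2},
\]
\[
J_{n+2} \mid J_n+J_{n+1}+2^mJ_nJ_{n+1}.
\]

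The plan is to simplify these using three facts. First, $J_{k+1}=2J_k+(-1)^k$, a direct consequence of the Binet formula. Second, consecutive Jacobsthal numbers are coprime. Third, all $J_k$ are odd.

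Modulo $J_n$ we have $J_{n+2}\equiv J_{n+1}$. The first condition therefore becomes $J_n\mid J_{n+1}(2+2^mJ_{n+1})$, hence $J_n\mid 2+2^mJ_{n+1}$. Since $J_{n+1}\equiv(-1)^n \pmod{J_n}$, this reduces to
\[
J_n\mid 2^m+2(-1)^n, \qquad\text{i.e.}\qquad 2^{m-1}\equiv -(-1)^n \pmod{J_n}.
\]
Similarly, modulo $J_{n+1}$ we have $J_{n+2}\equiv 2J_n$ and $2J_n\equiv(-1)^{n+1}$. The second condition then reduces to
\[
J_{n+1}\mid 3+(-1)^{n+1}2^m .
\]
In the same way, working modulo $J_{n+2}$ with $2J_{n+1}\equiv(-1)^n$ and $4J_n\equiv 2(-1)^n-2(-1)^{n}\cdot(\text{small})$, the third condition becomes $J_{n+2}\mid C+\varepsilon 2^m$ for explicit small constants $C$ and $\varepsilon=\pm1$. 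Each divisibility thus pins down $2^m$ modulo the corresponding Jacobsthal number to a residue of absolute value at most about $4$.

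Next I reduce $m$. From the Binet formula, $2^k\equiv(-1)^k \pmod{J_k}$, so $2^{2k}\equiv 1 \pmod{J_k}$. Write $m=2kq+r$ with $0\le r<2k$ for each of $k=n,n+1,n+2$. Then $2^r$, or $2^{r-k}$ up to sign when $r\ge k$, must be congruent modulo $J_k\approx 2^k/3$ to one of the small residues above. Since $2^s<J_k$ for $s\le k-3$, a congruence $2^s\equiv \delta \pmod{J_k}$ with $|\delta|\le 4$ forces $2^s=\delta$ or $2^s=J_k+\delta$ exactly. This leaves only finitely many admissible residues $r$, namely $r\in\{0,1,2,k-2,k-1,k,k+1,\dots\}$, each paired with a specific sign. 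Running this for the two moduli $J_n$ and $J_{n+1}$ simultaneously, the reductions of $m$ modulo $2n$ and modulo $2n+2$ must be compatible. Comparing parities and the signs $(-1)^n$ versus $(-1)^{n+1}$ should produce a contradiction once $n\ge5$, with the third condition modulo $J_{n+2}$ available as a further filter.

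This residue bookkeeping is where I expect the main obstacle. The wrap-around cases with $r$ near $k$ make the exact equation $2^s=J_k\pm\delta$ genuinely possible, because $2^{k}$ and $3J_k$ differ by only $1$. I would handle these cases by multiplying through by $3$ and comparing $2^{s}$ with $2^k\mp1\pm3\delta$ directly, so that only $s=k$ survives, with a definite sign. That sign is then checked against the parity of $n$ and the condition coming from the neighbouring index.

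For $n\le4$ the moduli $J_n\in\{1,1,3,5\}$ are too small for the size argument, so these cases are treated by hand. There I would use the reduced divisibilities together with the primality of the three factors, in particular reduction modulo $3$ and modulo $5$: for instance, the factor $2^mJ_n+1$ is divisible by $3$ for one parity of $m$. Either all three factors cannot be prime simultaneously, or the divisibility $J_{n+2}\mid C+\varepsilon 2^m$ fails for every admissible $m$ modulo the order of $2$.
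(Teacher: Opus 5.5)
Your route matches the paper's. You reduce Korselt's criterion to $J_n\mid(-1)^n2^m+2$, $J_{n+1}\mid 3-(-1)^n2^m$ and $J_{n+2}\mid 1-(-1)^n2^{m-1}$, using $J_{k+1}=2J_k+(-1)^k$ where the paper multiplies by $3$ and uses the Binet formula. Then come size and parity for $n\ge5$ and a hand check for $n\le4$.

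\textbf{The genuine gap is the case $n=1$.} You only ever use the conditions $p-1\mid N-1$, never squarefreeness. Your small-case dichotomy (either the three factors cannot all be prime, or some reduced divisibility fails for every admissible $m$) is false at $n=1$. For $n=1$, $m=2$ the factors are $5,5,13$, all prime, and $N-1=324$ is divisible by both $4$ and $12$, so every Korselt divisibility holds. Hence no argument built only from primality of the factors and $p_i-1\mid N-1$ can exclude this case, whether modulo $3$, modulo $5$, or otherwise. The missing idea is that $J_1=J_2=1$ makes the first two factors equal, so $N$ is not squarefree and therefore not Carmichael. This is exactly how the paper disposes of $n=1$.

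\textbf{The $n\ge5$ step is only announced.} Compatibility of $m$ modulo $2n$ and $2n+2$ is not what makes it work. Following the paper, reduce modulo $3J_k=2^k-(-1)^k$, where $2^k\equiv(-1)^k$.
\begin{itemize}
\item With $m=qn+r$, $0\le r<n$, the first condition becomes $J_n\mid\sigma2^r+2$ with $\sigma=\pm1$.
\item For $n\ge5$ we have $|\sigma2^r+2|<2J_n$. Since $J_n$ is odd and $\sigma2^r+2$ is even for $r\ge1$ (while $r=0$ gives a nonzero value below $J_n$), this forces $\sigma2^r+2=0$. So $r=1$, $\sigma=-1$, and $n$ is odd.
\item Then $2^{n+1}\equiv1\pmod{3J_{n+1}}$, and the second condition collapses to the exact equation $2^{n+1}=3\cdot2^s+10$ with $0\le s\le n$.
\item This is impossible modulo $4$ for $s\ge2$, and for $s\le1$ it yields only $n=3$.
\end{itemize}
Working modulo $3J_k$ with a sign in this way also removes the wrap-around cases you flagged as the main obstacle.

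\textbf{Your reduction of the third condition is garbled.} Modulo $J_{n+2}$ one has $2J_{n+1}\equiv(-1)^n$ and $4J_n\equiv-(-1)^n$, which gives $J_{n+2}\mid 2^{m-1}-(-1)^n$. You need this explicitly for $n=3$. There the paper plays $3\mid 2-2^m$ (so $m$ is odd) against $11\mid 1+2^{m-1}$ (so $m\equiv6\pmod{10}$).
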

\section{Preliminaries}

The Jacobsthal numbers satisfy
\[
J_1=1,\qquad J_2=1,\qquad J_{k+2}=J_{k+1}+2J_k.
\]

\begin{lemma}\label{lem:odd}
For every \(k\ge1\), \(J_k\) is odd.
\end{lemma}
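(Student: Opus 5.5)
The plan is to prove Lemma~\ref{lem:odd} by induction on \(k\), reducing the recurrence modulo \(2\). The key observation is that in the recurrence \(J_{k+2}=J_{k+1}+2J_k\) the term \(2J_k\) is even. Hence
\[
J_{k+2}\equiv J_{k+1}\pmod 2 \qquad (k\ge 1),
\]
so the parity of \(J_k\) is the same for all \(k\ge 1\).

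\textbf{Base case.} We have \(J_1=1\), which is odd. (We also have \(J_2=1\), though a single base case suffices here.)

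\textbf{Inductive step.} Suppose \(J_{k+1}\) is odd for some \(k\ge 0\), starting from \(J_1\). By the congruence above, \(J_{k+2}\equiv J_{k+1}\equiv 1\pmod 2\). This step uses the recurrence as stated in the Preliminaries, which is valid for \(k\ge1\); the case \(J_2=1\) is checked directly.

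\textbf{Alternative check.} The Binet formula gives the same conclusion. Since \(3\) is odd, \(J_k\) is odd exactly when \(3J_k=2^k-(-1)^k\) is odd. For \(k\ge1\), \(2^k\) is even and \((-1)^k=\pm1\) is odd, so their difference is odd.

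I do not expect any real obstacle. The only point needing care is that the recurrence is applied only for indices \(k\ge1\), so that \(J_0=0\) (which is even) never enters the argument.
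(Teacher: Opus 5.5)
Your proof is correct, but your main argument takes a different route from the paper's. You induct on the recurrence reduced mod $2$: since $J_{k+2}\equiv J_{k+1}\pmod 2$, the parity propagates from $J_1=1$. The paper's proof is exactly your ``alternative check'': it notes directly from the Binet formula that $3J_k=2^k-(-1)^k$ is odd for $k\ge1$, so $J_k$ is odd.

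The Binet argument is a one-liner with no induction, but it relies on the closed form and on knowing that $J_k$ is an integer. Your inductive argument uses only the defining recurrence and the oddness of $J_1$. It therefore applies unchanged to any integer sequence with $x_{k+2}=x_{k+1}+2x_k$ and $x_1$ odd.

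One small remark: the recurrence holds from $k=0$ on, since it is $J_m=J_{m-1}+2J_{m-2}$ for $m\ge2$. Moreover, the parity of $J_0$ never matters, because it enters only through the even term $2J_0$. So no special care is needed at the bottom index, and checking $J_2=1$ separately is harmless but unnecessary.
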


\begin{proof}
Since \(2^k\) is even and \((-1)^k=\pm1\), the numerator
\(2^k-(-1)^k\) is odd. As \(3\) is odd, \(J_k\) is odd.
\end{proof}

\begin{lemma}\label{lem:gcd}
For every \(k\ge1\),
\[
\gcd(J_k,J_{k+1})=1
\qquad\text{and}\qquad
\gcd(J_k,J_{k+2})=1.
\]
\end{lemma}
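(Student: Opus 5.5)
The plan is to run the Euclidean algorithm on the defining recurrence \(J_{k+2}=J_{k+1}+2J_k\), starting from \(J_1=J_2=1\). Lemma~\ref{lem:odd} shows that every \(J_k\) is odd, and we will use that to remove the factor \(2\) each time it appears.

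For the first claim, put \(d=\gcd(J_k,J_{k+1})\) and argue by induction on \(k\). When \(k=1\) we have \(d=\gcd(1,1)=1\). For the inductive step, suppose \(\gcd(J_k,J_{k+1})=1\) and let \(d=\gcd(J_{k+1},J_{k+2})\). The recurrence gives \(2J_k=J_{k+2}-J_{k+1}\), so \(d\mid 2J_k\). Since \(d\) divides \(J_{k+1}\), which is odd by Lemma~\ref{lem:odd}, \(d\) is odd, and therefore \(d\mid J_k\). Then \(d\) divides both \(J_k\) and \(J_{k+1}\), and the inductive hypothesis forces \(d=1\).

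For the second claim, put \(e=\gcd(J_k,J_{k+2})\). The recurrence gives \(J_{k+1}=J_{k+2}-2J_k\), so \(e\mid J_{k+1}\). Hence \(e\) divides \(\gcd(J_k,J_{k+1})\), which equals \(1\) by the first claim.

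As a cross-check, the Binet formula gives the same result directly: \(J_{k+2}-4J_k=(-1)^{k+1}\) when evaluated. So any common divisor of \(J_k\) and \(J_{k+2}\) divides \(1\), and the same method works for consecutive terms. The only delicate point is the oddness step, where we cancel the factor \(2\) in \(d\mid 2J_k\). Lemma~\ref{lem:odd} handles this immediately, so we expect no real obstacle.
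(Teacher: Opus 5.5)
Your proof is correct and is essentially the paper's argument: a Euclidean descent along $J_{k+2}=J_{k+1}+2J_k$, using oddness (Lemma~\ref{lem:odd}) to cancel the factor $2$, and reducing the second gcd to the first via $J_{k+1}=J_{k+2}-2J_k$. One minor slip in your Binet cross-check: the formula gives $J_{k+2}-4J_k=(-1)^k$, not $(-1)^{k+1}$ (e.g.\ $J_3-4J_1=-1$), which does not affect the conclusion.
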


\begin{proof}
The recurrence \(J_{k+2}=J_{k+1}+2J_k\) gives
\[
\gcd(J_k,J_{k+1})
=
\gcd(J_k,J_{k+1}+2J_k)
=
\gcd(J_k,J_{k+2}).
\]
Also,
\[
J_{k+1}=J_k+2J_{k-1},
\]
so
\[
\gcd(J_k,J_{k+1})
=
\gcd(J_k,2J_{k-1}).
\]
By Lemma~\ref{lem:odd}, \(J_k\) is odd, hence
\[
\gcd(J_k,J_{k+1})=\gcd(J_k,J_{k-1}).
\]
Iterating and using \(J_1=1\), we obtain
\(\gcd(J_k,J_{k+1})=1\). Therefore also
\(\gcd(J_k,J_{k+2})=1\).
\end{proof}

\begin{theorem}[Korselt's criterion] \cite{Korselt1899}
A composite integer $N$ is a Carmichael number if and only if
$N$ is squarefree and
\[
p-1\mid N-1
\]
for every prime divisor $p$ of $N$.
\end{theorem}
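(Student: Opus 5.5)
Although the paper cites Korselt's criterion \cite{Korselt1899}, here is how I would prove it directly. I would handle the two implications separately, relying only on Fermat's little theorem, the Chinese remainder theorem, and the existence of primitive roots modulo odd prime powers.

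For the sufficiency direction, assume $N$ is squarefree and $p-1\mid N-1$ for every prime $p\mid N$. Let $a$ be an integer with $\gcd(a,N)=1$. For each prime $p\mid N$, Fermat's little theorem gives $a^{p-1}\equiv 1\pmod p$. Since $p-1\mid N-1$, raising to the power $(N-1)/(p-1)$ gives $a^{N-1}\equiv 1\pmod p$. Because $N$ is squarefree, it is the product of these distinct primes. The Chinese remainder theorem (or simply the pairwise coprimality of the primes) then upgrades these congruences to $a^{N-1}\equiv 1\pmod N$. As $N$ is composite by hypothesis, $N$ is Carmichael.

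For the necessity direction, assume $N$ is Carmichael. I would first show that $N$ is odd. If $N$ were even, take $a=-1$, which is coprime to $N$. Then $N-1$ is odd, so $(-1)^{N-1}=-1\equiv 1\pmod N$, forcing $N\mid 2$. This is impossible for composite $N$.

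Next, suppose $p^2\mid N$ for some odd prime $p$. Write $N=p^e M$ with $e\ge 2$ and $p\nmid M$. Choose a primitive root $g$ modulo $p^2$, whose multiplicative order is $p(p-1)$. By the Chinese remainder theorem, pick $a$ with $a\equiv g\pmod{p^2}$ and $a\equiv 1\pmod M$. Then $\gcd(a,N)=1$, so $a^{N-1}\equiv 1\pmod{p^2}$. Hence $p\mid p(p-1)\mid N-1$, contradicting $p\mid N$. Therefore $N$ is squarefree.

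Finally, for a prime $p\mid N$, write $N=pM$ with $\gcd(p,M)=1$, which holds since $N$ is squarefree. Let $r$ be a primitive root modulo $p$, and by the Chinese remainder theorem choose $a\equiv r\pmod p$ and $a\equiv 1\pmod M$. Then $a$ is coprime to $N$, and $a^{N-1}\equiv1\pmod p$ forces the order $p-1$ of $r$ to divide $N-1$.

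The main obstacle is the squarefreeness step. It requires the existence of a primitive root modulo $p^2$, which I would take as standard: if $g$ is a primitive root mod $p$, then either $g$ or $g+p$ works mod $p^2$. I would also need the evenness argument above to exclude $p=2$, where $(\mathbb{Z}/4\mathbb{Z})^\times$ has order $2$ and the same idea would still work but is unnecessary. The remaining steps are routine applications of the Chinese remainder theorem.
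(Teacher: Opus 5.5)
Your proof is correct and complete. It is the standard textbook argument:
\begin{itemize}
\item sufficiency follows from Fermat's little theorem plus the Chinese remainder theorem;
\item for necessity, the witness $a=-1$ forces $N$ to be odd;
\item a CRT lift of a primitive root modulo $p^2$ rules out square factors;
\item a CRT lift of a primitive root modulo $p$ gives $p-1\mid N-1$.
\end{itemize}

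The paper gives no proof at all. It quotes the criterion from \cite{Korselt1899} and uses it as a black box. Only the ``only if'' half is ever used: squarefreeness in the case $n=1$, and $p_i-1\mid N-1$ in Lemma~\ref{lem:korselt}. So your sufficiency paragraph is fine but not needed for the main theorem. Your version makes the note self-contained, at the cost of assuming primitive roots modulo $p$ and $p^2$.

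If you want to trim it, the squarefreeness step can avoid both primitive roots modulo $p^2$ and the separate parity argument. Keep your notation $N=p^eM$ with $p\nmid M$, and suppose $p^2\mid N$. Choose $a\equiv 1+p\pmod{p^2}$ and $a\equiv 1\pmod{M}$. Then $a$ is coprime to $N$, and since $p\mid N$,
\[
a^{N-1}\equiv 1+(N-1)p\equiv 1-p\not\equiv 1\pmod{p^2}.
\]
This works for $p=2$ as well. The prime $2$ then needs no special treatment in the last step either, since $1\mid N-1$ trivially. With this change, only the cyclicity of $(\mathbb{Z}/p\mathbb{Z})^\times$ is required.
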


\section{Proof of Theorem \ref{thm1}}
Applying Korselt's criterion to the three prime factors of \(N\), we obtain
the following necessary divisibility conditions.
\begin{lemma}\label{lem:korselt}
Assume that
\[
N=(2^mJ_t+1)(2^mJ_{t+1}+1)(2^mJ_{t+2}+1)
\]
is a Carmichael number, where \(m,t\ge1\) and the three factors are prime.
Then
\[
J_t\mid (-1)^t2^m+2,
\]
\[
J_{t+1}\mid 3-(-1)^t2^m,
\]
\[
J_{t+2}\mid 1-(-1)^t2^{m-1}.
\]
\end{lemma}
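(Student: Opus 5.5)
The plan is to apply Korselt's criterion to each prime factor separately, but only modulo the odd part of $p_i-1$. Write $\varepsilon=(-1)^t$ and
\[
p_0=2^mJ_t+1,\qquad p_1=2^mJ_{t+1}+1,\qquad p_2=2^mJ_{t+2}+1 .
\]
Since $N$ is Carmichael, $p_i-1=2^mJ_{t+i}$ divides $N-1$. In particular $J_{t+i}\mid N-1$, which says $p_0p_1p_2\equiv 1 \pmod{J_{t+i}}$. The factor $p_i$ itself is $\equiv 1$ modulo $J_{t+i}$. So the condition reduces to the product of the other two factors being $\equiv1$ modulo $J_{t+i}$.

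To evaluate those factors, I first derive from the Binet formula the two linear identities
\[
J_{t+1}=2J_t+\varepsilon,\qquad J_{t+2}=2J_{t+1}-\varepsilon=4J_t+\varepsilon .
\]
The first is the direct computation $\bigl(2^{t+1}+\varepsilon\bigr)/3-2\bigl(2^t-\varepsilon\bigr)/3=\varepsilon$. The second follows by applying the first with $t+1$ in place of $t$. These let me reduce every Jacobsthal number appearing in $N$ to $\pm\varepsilon$ times a power of $2$ modulo the relevant $J_{t+i}$. By Lemma~\ref{lem:odd} each $J_{t+i}$ is odd, so $2$ is invertible modulo $J_{t+i}$, and I may freely multiply or cancel powers of $2$. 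This invertibility is what makes everything work uniformly, including $m=1$, where $2^{m-2}$ is a fraction.

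The three computations are as follows.

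Modulo $J_t$: $J_{t+1}\equiv J_{t+2}\equiv\varepsilon$. The condition becomes $(2^m\varepsilon+1)^2\equiv1$, i.e.\ $J_t\mid 2^m\varepsilon\,(2^m\varepsilon+2)$. Cancelling the unit $2^m\varepsilon$ gives $J_t\mid (-1)^t2^m+2$.

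Modulo $J_{t+1}$: $2J_t\equiv-\varepsilon$ and $J_{t+2}\equiv-\varepsilon$. Hence $2^mJ_t\equiv-2^{m-1}\varepsilon$, and the condition becomes $(1-2^{m-1}\varepsilon)(1-2^m\varepsilon)\equiv1$. Expanding and using $\varepsilon^2=1$ gives $2^{m-1}\bigl(2^m-3\varepsilon\bigr)\equiv0$. Cancelling $2^{m-1}$ and multiplying by $-\varepsilon$ yields $J_{t+1}\mid 3-(-1)^t2^m$.

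Modulo $J_{t+2}$: $2J_{t+1}\equiv\varepsilon$ and $4J_t\equiv-\varepsilon$. The two other factors are then $1+2^{m-1}\varepsilon$ and $1-2^{m-2}\varepsilon$, interpreted with $2^{-1}$ if $m=1$. The product condition becomes $2^{m-2}\varepsilon-2^{2m-3}\equiv0$. Cancelling $2^{m-2}\varepsilon$ gives $J_{t+2}\mid 1-(-1)^t2^{m-1}$.

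I expect no real obstacle here. The only delicate points are bookkeeping the sign $\varepsilon$ correctly when expanding the products. The other is justifying the cancellation of powers of $2$, including negative exponents when $m=1$, which rests entirely on Lemma~\ref{lem:odd}. The primality of the factors is used only to invoke Korselt's criterion; in particular, squarefreeness and $N$ being composite are automatic.
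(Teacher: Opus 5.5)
Your argument is correct, and it reaches the three conditions by a more direct route than the paper.

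The paper's route has two stages. It expands $N-1$ and divides by $2^m$ to get conditions such as $J_t\mid (J_{t+1}+J_{t+2})+2^mJ_{t+1}J_{t+2}$. It reduces these with the three-term recurrence and cancels a Jacobsthal factor using the coprimality Lemma~\ref{lem:gcd}, which gives intermediate statements like $J_t\mid 2^mJ_{t+1}+2$. Only then does it bring in the Binet formula: multiply by $3$, reduce modulo $2^t-\varepsilon$, and divide by $3$ again.

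You instead extract from Binet, once and at the outset, the two-term relation $J_{k+1}=2J_k+(-1)^k$. Modulo $J_{t+i}$, every other Jacobsthal number is then $\pm\varepsilon$ times a power of $2$. Your condition that the product of the other two factors is $\equiv 1$ is just $2^m$ times the paper's reduced condition. It becomes an identity in $2^m$ and $\varepsilon$ alone, and the only cancellation required is of a power of $2$.

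This buys you two things: the two-stage computation collapses into one, and Lemma~\ref{lem:gcd} becomes unnecessary, since only the oddness from Lemma~\ref{lem:odd} is used. The price is that $2^{-1}$ modulo $J_{t+2}$ appears when $m=1$. You justify this correctly; multiplying that congruence by $2$ beforehand would avoid it entirely.

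One small inaccuracy: your closing remark that squarefreeness is automatic is false for $t=1$, where $J_1=J_2$ forces $p_0=p_1$. This is harmless, because your proof only uses the implication that a Carmichael number satisfies $p-1\mid N-1$ for each prime $p\mid N$.
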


\begin{proof}
Let
\[
p_1=2^mJ_t+1,\qquad
p_2=2^mJ_{t+1}+1,\qquad
p_3=2^mJ_{t+2}+1.
\]
Since \(N\) is Carmichael, it is squarefree, and Korselt's criterion gives
\[
p_i-1\mid N-1
\qquad(i=1,2,3).
\]

Set
\[
k_1=J_t,\qquad k_2=J_{t+1},\qquad k_3=J_{t+2}.
\]
Then
\[
N-1
=
2^m(k_1+k_2+k_3)
+2^{2m}(k_1k_2+k_1k_3+k_2k_3)
+2^{3m}k_1k_2k_3.
\]
By Lemma~\ref{lem:odd}, each \(k_i\) is odd. Hence
\(\gcd(2^m,k_i)=1\). Therefore \(k_i\mid N-1\) is equivalent to the
condition obtained after dividing by \(2^m\):
\[
k_1\mid (k_2+k_3)+2^mk_2k_3,
\]
\[
k_2\mid (k_1+k_3)+2^mk_1k_3,
\]
\[
k_3\mid (k_1+k_2)+2^mk_1k_2.
\]

We use the recurrence
\[
J_{t+2}=J_{t+1}+2J_t.
\]

First condition:
\\

Modulo \(J_t\), we have \(J_{t+2}\equiv J_{t+1}\). Hence
\[
(J_{t+1}+J_{t+2})+2^mJ_{t+1}J_{t+2}
\equiv
2J_{t+1}+2^mJ_{t+1}^2
=
J_{t+1}(2^mJ_{t+1}+2)
\pmod{J_t}.
\]
Thus
\[
J_t\mid J_{t+1}(2^mJ_{t+1}+2).
\]
By Lemma~\ref{lem:gcd}, \(\gcd(J_t,J_{t+1})=1\), so
\begin{equation}\label{eq:first}
J_t\mid 2^mJ_{t+1}+2.
\end{equation}

Second condition:\\

Modulo \(J_{t+1}\), the recurrence gives
\[
2J_t=J_{t+2}-J_{t+1}\equiv J_{t+2}.
\]

Multiplying the second divisibility condition by \(2\), which preserves
divisibility, gives
\[
J_{t+1}\mid
2(J_t+J_{t+2})+2^{m+1}J_tJ_{t+2}.
\]
Using \(2J_t\equiv J_{t+2}\pmod{J_{t+1}}\), we obtain

\[
2(J_t+J_{t+2})+2^{m+1}J_tJ_{t+2}
\equiv
3J_{t+2}+2^mJ_{t+2}^2
=
J_{t+2}(2^mJ_{t+2}+3)
\pmod{J_{t+1}}.
\]
Hence
\[
J_{t+1}\mid J_{t+2}(2^mJ_{t+2}+3).
\]
By Lemma~\ref{lem:gcd}, \(\gcd(J_{t+1},J_{t+2})=1\), so
\begin{equation}\label{eq:second}
J_{t+1}\mid 2^mJ_{t+2}+3.
\end{equation}

Third condition:\\

Modulo \(J_{t+2}\), the recurrence gives
\[
J_{t+1}=J_{t+2}-2J_t\equiv-2J_t.
\]
Therefore
\[
(J_t+J_{t+1})+2^mJ_tJ_{t+1}
\equiv
J_t-2J_t+2^mJ_t(-2J_t)
=
-J_t(1+2^{m+1}J_t)
\pmod{J_{t+2}}.
\]
Thus
\[
J_{t+2}\mid J_t(2^{m+1}J_t+1).
\]
By Lemma~\ref{lem:gcd}, \(\gcd(J_t,J_{t+2})=1\), so
\begin{equation}\label{eq:third}
J_{t+2}\mid 2^{m+1}J_t+1.
\end{equation}

Binet translation:
\\
Let \(\varepsilon=(-1)^t\). Then
\[
3J_t=2^t-\varepsilon,\qquad
3J_{t+1}=2^{t+1}+\varepsilon,\qquad
3J_{t+2}=2^{t+2}-\varepsilon.
\]

From \eqref{eq:first},
\[
J_t\mid 2^mJ_{t+1}+2.
\]
Multiplying by \(3\),
\[
2^t-\varepsilon\mid 2^{m+t+1}+\varepsilon2^m+6.
\]
Since \(2^t\equiv\varepsilon\pmod{2^t-\varepsilon}\),
\[
2^{m+t+1}+\varepsilon2^m+6
\equiv
\varepsilon2^{m+1}+\varepsilon2^m+6
=
3(\varepsilon2^m+2).
\]
Hence
\[
2^t-\varepsilon\mid 3(\varepsilon2^m+2).
\]
Since \(2^t-\varepsilon=3J_t\), dividing by \(3\) gives
\begin{equation}\label{eq:binet1}
J_t\mid \varepsilon2^m+2.
\end{equation}

From \eqref{eq:second},
\[
J_{t+1}\mid 2^mJ_{t+2}+3.
\]
Multiplying by \(3\),
\[
2^{t+1}+\varepsilon\mid 2^{m+t+2}-\varepsilon2^m+9.
\]
Since \(2^{t+1}\equiv-\varepsilon\), we have
\(2^{t+2}\equiv-2\varepsilon\). Therefore
\[
2^{m+t+2}-\varepsilon2^m+9
\equiv
-3\varepsilon2^m+9
=
3(3-\varepsilon2^m).
\]
Thus
\[
2^{t+1}+\varepsilon\mid 3(3-\varepsilon2^m).
\]
Since \(2^{t+1}+\varepsilon=3J_{t+1}\), dividing by \(3\) gives
\begin{equation}\label{eq:binet2}
J_{t+1}\mid 3-\varepsilon2^m.
\end{equation}

From \eqref{eq:third},
\[
J_{t+2}\mid 2^{m+1}J_t+1.
\]
Multiplying by \(3\),
\[
2^{t+2}-\varepsilon\mid 2^{m+t+1}-\varepsilon2^{m+1}+3.
\]
Since \(2^{t+2}\equiv\varepsilon\), we have
\[
2^{m+t+1}-\varepsilon2^{m+1}+3
\equiv
\varepsilon2^{m-1}-\varepsilon2^{m+1}+3
=
3(1-\varepsilon2^{m-1}).
\]
Thus
\[
2^{t+2}-\varepsilon\mid 3(1-\varepsilon2^{m-1}).
\]
Since \(2^{t+2}-\varepsilon=3J_{t+2}\), dividing by \(3\) gives
\begin{equation}\label{eq:binet3}
J_{t+2}\mid 1-\varepsilon2^{m-1}.
\end{equation}

This proves the lemma.
\end{proof}

We are now ready to prove Theorem \ref{thm1}.

Suppose, for contradiction, that \(N\) is a Carmichael number.
By Lemma~\ref{lem:korselt}, with \(t=n\) and \(\varepsilon=(-1)^n\), we have
\begin{align}
J_n&\mid \varepsilon2^m+2, \label{eq:A}\\
J_{n+1}&\mid 3-\varepsilon2^m, \label{eq:B}\\
J_{n+2}&\mid 1-\varepsilon2^{m-1}. \label{eq:C}
\end{align}

We first show that \(n\le4\). Assume \(n\ge5\), and write
\[
m=qn+r,
\qquad
0\le r<n.
\]

\subsection{First divisibility}

By the first divisibility condition \eqref{eq:A},
\[
J_n\mid \varepsilon 2^m+2,
\qquad \varepsilon=(-1)^n.
\]
Since
\[
3J_n=2^n-\varepsilon,
\]
we obtain
\[
2^n-\varepsilon\mid 3(\varepsilon 2^m+2).
\]
Write
\[
m=qn+r,\qquad q\ge0,\quad 0\le r<n,
\]
and set
\[
\sigma=\varepsilon^{q+1}\in\{-1,1\}.
\]
As \(2^n\equiv\varepsilon\pmod{2^n-\varepsilon}\), we have
\[
2^m\equiv \varepsilon^q2^r
\pmod{2^n-\varepsilon}.
\]
Consequently,
\[
2^n-\varepsilon\mid 3(\sigma2^r+2).
\]
Furthermore,
\[
\begin{aligned}
\lvert 3(\sigma2^r+2)\rvert
&\le 3(2^r+2)\\
&\le 3(2^{n-1}+2)\\
&<2(2^n-\varepsilon),
\end{aligned}
\]
for \(n\ge5\). Hence there exists
\(\delta\in\{-1,0,1\}\) such that
\[
3(\sigma2^r+2)=\delta(2^n-\varepsilon).
\tag{1}
\]

If \(r=0\), then
\[
3(\sigma+2)\in\{3,9\},
\]
whereas \(2^n-\varepsilon\ge31\). This contradicts (1).
Therefore \(r\ge1\).

We next prove that \(\delta\ne\pm1\). Suppose first that
\(\delta=1\). Then
\[
3(\sigma2^r+2)=2^n-\varepsilon.
\]
If \(\sigma=1\), we obtain
\[
3\cdot2^r+6=2^n-\varepsilon,
\]
and thus
\[
2^r(2^{n-r}-3)=6+\varepsilon.
\]
This is impossible because the left-hand side is even, whereas
\(6+\varepsilon\in\{5,7\}\) is odd.

If \(\sigma=-1\), then
\[
6-3\cdot2^r=2^n-\varepsilon,
\]
which gives
\[
2^r(3+2^{n-r})=6+\varepsilon.
\]
Again, the left-hand side is even and the right-hand side is odd,
a contradiction.

Suppose now that \(\delta=-1\). Equation (1) becomes
\[
3(\sigma2^r+2)=-(2^n-\varepsilon).
\]
If \(\sigma=1\), the left-hand side is positive and the right-hand
side is negative, which is impossible.

If \(\sigma=-1\), we have
\[
6-3\cdot2^r=-2^n+\varepsilon,
\]
so
\[
2^r(3-2^{n-r})=6-\varepsilon.
\]
This is impossible because the left-hand side is even, whereas
\(6-\varepsilon\in\{5,7\}\) is odd.

Therefore \(\delta=0\). Equation (1) reduces to
\[
\sigma2^r+2=0.
\]
Since \(r\ge1\) and \(\sigma\in\{-1,1\}\), it follows that
\[
r=1,\qquad \sigma=-1.
\]
Finally, \(\sigma=\varepsilon^{q+1}=-1\) implies
\(\varepsilon=-1\). Hence \(n\) is odd.

\subsection{Second divisibility}

Because \(n\) is odd, \(\varepsilon=-1\). Thus \eqref{eq:B} becomes
\[
J_{n+1}\mid 3+2^m.
\]
Since \(n\) is odd,
\[
3J_{n+1}=2^{n+1}-1.
\]
Multiplying by \(3\), we get
\[
2^{n+1}-1\mid 3(3+2^m).
\]

Write
\[
m=q'(n+1)+s,
\qquad
0\le s\le n.
\]
Since \(2^{n+1}\equiv1\pmod{2^{n+1}-1}\), we have
\(2^m\equiv2^s\). Hence
\begin{equation}\label{eq:E}
2^{n+1}-1\mid 3(3+2^s).
\end{equation}

Now
\[
0<3(3+2^s)\le3(3+2^n)<2(2^{n+1}-1)
\]
for \(n\ge5\). Therefore the quotient in \eqref{eq:E} is a positive
integer strictly smaller than \(2\), hence equal to \(1\). Thus
\[
3(3+2^s)=2^{n+1}-1,
\]
or equivalently,
\begin{equation}\label{eq:F}
2^{n+1}=3\cdot2^s+10.
\end{equation}

We show that \eqref{eq:F} is impossible.

\begin{itemize}
\item If \(s=0\), then \(2^{n+1}=13\), impossible.
\item If \(s=1\), then \(2^{n+1}=16\), so \(n=3\), contradicting \(n\ge5\).
\item If \(s\ge2\), then
\[
3\cdot2^s+10\equiv2\pmod4,
\]
whereas
\[
2^{n+1}\equiv0\pmod4,
\]
impossible.
\end{itemize}

Therefore \(n\le4\).

\subsection{Small cases}

It remains to check \(n=1,2,3,4\).

\subsubsection*{Case \(n=1\)}

We have
\[
J_1=1,\qquad J_2=1.
\]
Hence the first two factors of \(N\) are equal:
\[
2^mJ_1+1=2^mJ_2+1.
\]
Thus \(N\) is not squarefree, contradicting the fact that every Carmichael
number is squarefree.

\subsubsection*{Case \(n=2\)}

Here \(\varepsilon=1\), and \eqref{eq:B} gives
\[
J_3=3\mid 3-2^m.
\]
But
\[
3-2^m\equiv-2^m\not\equiv0\pmod3,
\]
since \(3\nmid2^m\). Contradiction.

\subsubsection*{Case \(n=3\)}

Here \(\varepsilon=-1\). From \eqref{eq:A},
\[
J_3=3\mid -2^m+2,
\]
so
\[
2^m\equiv2\pmod3.
\]
Hence \(m\) is odd.

From \eqref{eq:C},
\[
J_5=11\mid 1+2^{m-1}.
\]
Thus
\[
2^{m-1}\equiv-1\pmod{11}.
\]
Since \(\operatorname{ord}_{11}(2)=10\) and
\(2^5\equiv-1\pmod{11}\), we get
\[
m-1\equiv5\pmod{10}.
\]
Therefore
\[
m\equiv6\pmod{10},
\]
so \(m\) is even, contradicting that \(m\) is odd.

\subsubsection*{Case \(n=4\)}

Here \(\varepsilon=1\). From \eqref{eq:A},
\[
J_4=5\mid 2^m+2,
\]
so
\[
2^m\equiv3\pmod5.
\]
Since \(\operatorname{ord}_5(2)=4\) and
\(2^3\equiv3\pmod5\), we get
\[
m\equiv3\pmod4.
\]
In particular, \(m\) is odd.

From \eqref{eq:B},
\[
J_5=11\mid 3-2^m,
\]
so
\[
2^m\equiv3\pmod{11}.
\]
Since \(\operatorname{ord}_{11}(2)=10\) and
\(2^8\equiv3\pmod{11}\), we get
\[
m\equiv8\pmod{10}.
\]
In particular, \(m\) is even, contradicting that \(m\) is odd.

All cases lead to a contradiction. Therefore no integer \(N\) of the
stated form, with \(m,n\ge1\) and all three factors prime, is a Carmichael
number.
\section*{Declarations}
The author declares that there are no conflicts of interest.

\end{document}